\documentclass[11pt,reqno]{amsart}
\usepackage{hyperref}
\usepackage{enumerate,fullpage}
\usepackage{amssymb,amsmath,graphicx,amsthm}
\usepackage{color}


\newtheorem{theorem}{Theorem}
\newtheorem*{theorem1}{Theorem}
\newtheorem{corollary}[theorem]{Corollary}
\newtheorem{lemma}[theorem]{Lemma}
\newtheorem{proposition}[theorem]{Proposition}
\theoremstyle{remark}
\newtheorem{remark}{Remark}
\theoremstyle{definition}
\newtheorem{definition}{Definition}

\newcommand{\Om}{\Omega}

\newcommand{\p}{\partial}


\newcommand{\ep}{\epsilon}

\def\di{\partial}
\def\dib{\bar\partial}

\def\T{\text}
\def\C{\mathbb C}

\begin{document}

	\title[Iterates of holomorphic self-maps]{Iterates of holomorphic self-maps on  pseudoconvex domains of finite and infinite type in $\mathbb C^n$}
	
	\author[T.V. Khanh and N.V. Thu]{ Tran Vu Khanh and Ninh Van Thu} 
	\subjclass[2010]{Primary 32H50; Secondary 37F99}
	\keywords{Wolff-Denjoy-type theorem, finite type, infinite type, $f$-property, Kobayashi metric, Kobayashi distance}
	
	\address{Tran Vu Khanh}
	\address{School of Mathematics and Applied Statistics, University of Wollongong, NSW, Australia,  2522}
	\email{tkhanh@uow.edu.au}
	\address{Ninh Van Thu}
	\address{Department of Mathematics, Vietnam National University at Hanoi, 334 Nguyen Trai, Thanh Xuan, Hanoi, Vietnam}
	\email{thunv@vnu.edu.vn}

	\begin{abstract}  Using the lower bounds on the Kobayashi metric established by the first author \cite{Kha14}, we prove the Wolff-Denjoy-type theorem for a very large class of pseudoconvex domains in $\C^n$ that may contain many classes of pseudoconvex domains of finite type and infinite type.
	\end{abstract}
	\maketitle
	
	\section{Introduction}

	In 1926, Wolff \cite{Wolff} and Denjoy \cite{Denjoy} established their famous theorem regarding the behavior of iterates of holomorphic self-mapings without fixed points of the unit disk $\Delta$ in the complex plan.  
	\begin{theorem1}[Wolff-Denjoy \cite{Wolff, Denjoy}, 1926] Let $\phi:\Delta\to \Delta$ be a holomorphic self-map without fixed points. Then there exists a point $x$ in the unit circle $\partial\Delta$ such that the sequence $\{\phi^k\}$ of iterates of $\phi$ converges, uniformly on any compact subsets of $\Delta$, to the constant map taking the value $x$.
	\end{theorem1}
	The generalization of this theorem to domains in $\C^n$, $n\ge 2$, is clearly a natural problem. This has been done in several cases:
\begin{itemize}
	\item the unit ball (see \cite{Herv});
	\item strongly convex domains (see \cite{A1, A4, A-R});
\item strongly pseudoconvex domains (see \cite{A3, Hua});
\item pseudoconvex domains of strictly finite type in the sense of Range \cite{Ra} (see \cite{A3}) ;
\item pseudoconvex domains of finite type in $\C^2$ (see \cite{Kar, Zha}).
\end{itemize}
 The main goals of this paper is to prove the Wolff-Denjoy-type theorem for a very general class of bounded pseudoconvex domains in $\C^n$ that may contain  many classes of pseudoconvex domains of finite type and also infinite type. In particular, 	we shall prove that (the definitions are given below)
 \begin{theorem}\label{main} Let $\Omega\subset\mathbb C^n$ be a bounded, pseudoconvex domain with $C^2$-smooth boundary $\partial\Omega$. Assume that 
 	\begin{enumerate}
 		\item[(i)] $\Om$ has the $f$-property with $f$ satisfying $\displaystyle\int_1^\infty\frac{\ln \alpha}{\alpha f(\alpha)}d\alpha<\infty$ ; and
 		\item[(ii)] the Kobayashi distance of $\Om$ is complete.
 	\end{enumerate}Then, if $\phi:\Om\to\Om$ is a holomorphic self-map such that the sequence of iterates $\{\phi^k\}$ is compactly divergent, then the sequence $\{\phi^k\}$ converges, uniformly on a compact set, to a point of the boundary.
 \end{theorem}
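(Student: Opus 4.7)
The plan is to follow the classical Wolff--Denjoy program, with the role of strong pseudoconvexity replaced by the Kobayashi estimates available under the $f$-property. Since $\Om\subset\C^n$ is bounded, Montel's theorem makes $\{\phi^k\}$ a normal family, so every subsequence admits a further subsequence $\{\phi^{n_k}\}$ converging locally uniformly to a holomorphic map $h:\Om\to\overline{\Om}$. The compact divergence hypothesis forces $h(\Om)\subset\p\Om$ (otherwise an interior value would produce a compact set revisited by the orbit). The theorem therefore reduces to showing that (a) each such limit $h$ is constant, and (b) the constant value is the same for every convergent subsequence.

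The main analytic input is a \emph{visibility} statement that I would derive from the lower bound for the Kobayashi metric in \cite{Kha14}: under the $f$-property the infinitesimal Kobayashi metric at $z\in\Om$ is bounded below in terms of $f\bigl(1/\delta_\Om(z)\bigr)$, where $\delta_\Om$ denotes the boundary distance. The visibility claim is: if $p\ne q$ are points of $\p\Om$ and $z_n\to p$, $w_n\to q$ in $\Om$, then $K_\Om(z_n,w_n)\to\infty$. The strategy is to pick Kobayashi almost-minimizing curves from $z_n$ to $w_n$, note that any such curve must traverse a boundary slab separating $p$ from $q$ along which the Kobayashi metric blows up, and integrate the lower bound over this slab. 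The integral hypothesis $\int_1^\infty\frac{\ln\alpha}{\alpha f(\alpha)}d\alpha<\infty$ is designed precisely so that this integral diverges; this step is the main technical obstacle, since one must control both normal and tangential directions near the boundary, including at points of infinite type.

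Granted visibility, part (a) is immediate: Kobayashi contraction gives $K_\Om(\phi^{n_k}(z),\phi^{n_k}(z_0))\le K_\Om(z,z_0)$, which is bounded uniformly in $k$; compact divergence drives $\phi^{n_k}(z)\to h(z)\in\p\Om$ and $\phi^{n_k}(z_0)\to h(z_0)\in\p\Om$; visibility then forces $h(z)=h(z_0)$, so $h$ is constant.

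For part (b) I would follow Abate's horosphere strategy. Fix $z_0\in\Om$; for $p\in\p\Om$ and a sequence $z_n\to p$, define the Busemann-type function
$$\beta_p(z)=\limsup_{n\to\infty}\bigl[K_\Om(z,z_n)-K_\Om(z_0,z_n)\bigr]$$
and the small horosphere $E_p(R)=\{z\in\Om:\beta_p(z)<\tfrac12\log R\}$. Kobayashi completeness (hypothesis (ii)) makes $\beta_p$ finite, and visibility yields $\overline{E_p(R)}\cap\p\Om=\{p\}$. A Wolff-type lemma, using only Kobayashi contraction of $\phi$, then produces a boundary point $p^*$ such that a decreasing family of small horospheres $E_{p^*}(R_k)$, $R_k\to 0$, are $\phi$-invariant and trap the orbit $\{\phi^k(z_0)\}$. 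This forces $\phi^k(z_0)\to p^*$; combined with part (a), the full sequence $\{\phi^k\}$ converges uniformly on compact subsets to the constant map with value $p^*$.
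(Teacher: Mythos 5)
Your overall architecture (normality plus compact divergence, a visibility statement for the Kobayashi distance, then Abate's horosphere machinery) is exactly the paper's, but the one step you flag as ``the main technical obstacle'' is the whole content of the theorem, and the mechanism you propose for it does not work. You want to prove: if $z_n\to p$, $w_n\to q$ with $p\ne q$ on $\p\Om$, then $k_\Om(z_n,w_n)\to\infty$, by arguing that an almost-minimizing curve ``must traverse a boundary slab separating $p$ from $q$ along which the metric blows up.'' A curve from $z_n$ to $w_n$ is free to leave the slab $\{\delta_\Om<\eta\}$ and travel through the deep interior, and the exit costs only a \emph{bounded} amount of Kobayashi length: the lower bound $K_\Om(z,X)\gtrsim g(\delta_\Om^{-1}(z))|X|$ gives an escape cost of order $\int_{\delta(z_n)}^{\eta}g(1/s)\,ds$, which stays bounded as $\delta(z_n)\to 0$ (already for $f(t)=t^\epsilon$). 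This is precisely why the metric lower bound alone does not imply completeness, and why completeness is a separate hypothesis (ii); your sketch never uses (ii) in the visibility step, where it is in fact indispensable.

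The paper's actual mechanism (Lemma 4 and Corollary 5) is different and is the key idea you are missing: combine the \emph{upper} bound $k_\Om(z_0,z)\le c_0-\tfrac12\log\delta_\Om(z)$ with the lower metric bound along a unit-speed minimizing geodesic $\gamma$ from $z$ to $w$ to show that the Euclidean length of $\gamma$ is at most $c\int_{e^{2k_\Om(z_0,\gamma)-2c_0}}^{\infty}\frac{\ln\alpha}{\alpha f(\alpha)}\,d\alpha$; the hypothesis on $f$ makes this tail tend to $0$, so if $|p-q|\gtrsim 1$ the geodesic must pass within \emph{bounded Kobayashi distance of the interior base point} $z_0$. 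Visibility then follows from $k_\Om(z_n,w_n)\ge k_\Om(z_0,w_n)-k_\Om(z_0,p_n)$ together with $k$-completeness. Two smaller inaccuracies in your horosphere part: Abate's Wolff lemma yields $\phi^m(E_{z_0}(x,R))\subset F_{z_0}(x,R)$ with the \emph{big} horosphere on the right (the $\liminf$/$\limsup$ asymmetry prevents a decreasing family of $\phi$-invariant small horospheres), so the boundary-contact statement you need is $F$-convexity, $\overline{F_{z_0}(x,R)}\cap\p\Om\subseteq\{x\}$, not the corresponding statement for $E$; and the endgame is not trapping the single orbit $\{\phi^k(z_0)\}$ but showing, via $\lim_{a\to y}E_a(x,R)=\Om$, that every $z\in\Om$ lies in a small horosphere whose image under $\phi^m$ is confined to a big horosphere touching $\p\Om$ only at $x$.
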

We say that the Wolff-Denjoy-type theorem for $\Om$ holds if the conclusion of Theorem~\ref{main} holds. Following the work by Abate \cite{A1,A3, A4} and using the estimate of the Kobayashi distance on domains of the $f$-property, we will prove Theorem~\ref{main} in Section 3. \\

 Here we have some remarks on the $f$-property and on the completeness of the Kobayashi distance. The $f$-property is defined in \cite{Khanh, Kha14} as 
 	\begin{definition} 
 		\label{d1}  We say that domain $\Om$ has the $f$-property if there exists a family of functions $\{\psi_\eta\}$ such that
 		\begin{enumerate}
 			\item [(i)] $|\psi_\eta| \le1$, $C^2$, and plurisubharmonic on $\Om$;
 			\item[(ii)] $i\di\dib \psi_\eta \ge c_1f(\eta^{-1})^2\T{Id}$ and $|D\psi_\delta|\le c_2\eta^{-1}$ on   $\{z\in \Om:-\eta<\delta_\Om(z)<0\}$ for some constants $c_1,c_2>0$,  where $\delta_\Om(z)$ is the euclidean distance from $z$ to the boundary $\di\Om$.
 		\end{enumerate} 
 		 	\end{definition}
  This is an analytic condition where the function $f$ reflects the geometric ``type" of the boundary. For example, by Catlin's results on pseudoconvex domains of finite type through the lens of the $f$-property \cite{Cat83, Cat87}, $\Om$ is of finite if and only if there exists an $\epsilon>0$ such that the $t^\epsilon$-property holds. If domain is reduced to be convex of finite type $m$, then the $t^{1/m}$-property holds \cite{McNeal}. Furthermore, there is a large class of infinite type pseudoconvex domains that satisfy an  $f$-properties \cite{ Khanh, Kha14}. For example (see \cite{Khanh}), 
	the $\log^{1/\alpha}$-property holds for both the complex ellipsoid of infinite type
	\begin{eqnarray}\label{complexellipsoid}
	\Om=\left\{z\in\C^n: \sum_{j=1}^n \exp\left(-\frac{1}{|z_j|^{\alpha_j}}\right)-e^{-1}<0\right\}
	\end{eqnarray}
with $\alpha:=\max_{j}\{\alpha_j\}$, and the real ellipsoid of infinite type
	\begin{eqnarray}\label{realellipsoid}
	\tilde\Om=\left\{z=(x_1+iy_{1},\dots,x_n+iy_{n})\in\C^n: \sum_{j=1}^{n} \exp\left(-\frac{1}{|x_j|^{\alpha_j}}\right)+\exp\left(-\frac{1}{|y_j|^{\beta_j}}\right)-e^{-1}<0\right\}
 \end{eqnarray}
with $\alpha:=\max_{j}\{\min\{\alpha_j,\beta_j\}\}$, where $\alpha_j,\beta_j>0$ for all $j=1,2,\dots$. The influence of the $f$-property on estimates of the Kobayashi metric and distance will be given in Section 2. \\

	The completeness of the Kobayashi distance (or $k$-completeness for short) is a natural condition of hyperbolic manifolds. The qualitative condition for the $k$-completeness of a bounded domain $\Om$ in $\C^n$ is the Kobayashi distance 
$$k_\Om(z_0,z)\to \infty\quad \T{as}\quad z\to \di\Om$$
 for any point $z_0\in \Om$. By literature, it is well-known that this condition holds for strongly pseudoconvex domains \cite{F-R}, or convex domains \cite{Mer}, or pseudoconvex domains of finite type in $\C^2$ \cite{Zha}, pseudoconvex Reinhardt domains \cite{Wa}, or domains enjoying a local holomorphic peak  function at any boundary point \cite{G}. We also remark that the domain defined by \eqref{complexellipsoid} (resp. \eqref{realellipsoid}) is $k$-complete because it is a pseudoconvex Reinhardt domain (resp. convex domain). These remarks immediately lead to the following corollary.
\begin{corollary}\label{cor1} Let $\Om$ be a bounded domain in $\C^n$. The Wolff-Denjoy-type theorem  for $\Om$ holds if $\Om$ satisfies at least one of the following settings:
	\begin{enumerate}
		\item[(a)] $\Om$  is a strongly pseudoconvex domain;
  \item[(b)]  $\Om$ is a pseudoconvex domains of finite type and $n=2$;
  \item[(c)] $\Om$ is a convex domain  of finite type;
  \item[(d)] $\Om$ is a pseudoconvex Reinhardt domains of finite type;
  \item[(e)] $\Om$ is a pseudoconvex domain of finite type (or of infinite type having the $f$-property with $f(t)\ge \ln^{2+\epsilon}(t)$ for any $\epsilon>0$) such that $\Om$ has a local, continuous, holomorphic peak function at each boundary point, i.e., for any $x\in \di\Om$ there exist a neighborhood $U$ of $p$ and a holomorphic function $p$ on $\Om\cap U$, continuous up to $\bar\Om\cap U$, and satisfies
  $$p(x)=1,\quad p(z)<1,\quad\T{for all~~} z\in \bar\Om\cap U\setminus\{x\};$$
  	\item[(f)] $\Om$ is defined by \eqref{complexellipsoid} or \eqref{realellipsoid} with $\alpha<\frac{1}{2}$.
\end{enumerate}
\end{corollary}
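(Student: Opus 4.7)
The plan is to reduce Corollary~\ref{cor1} to a case-by-case verification of the two hypotheses of Theorem~\ref{main}, since essentially every needed ingredient has already been catalogued in the introduction. The only genuinely non-trivial point is a single integral estimate in case (f).

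Condition (ii), $k$-completeness, is immediate from the references collected just above the statement: \cite{F-R} for (a), \cite{Zha} for (b), \cite{Mer} for (c), \cite{Wa} for (d), and \cite{G} for the peak-function criterion in (e). Case (f) splits: the complex ellipsoid \eqref{complexellipsoid} is a pseudoconvex Reinhardt domain and so is $k$-complete by \cite{Wa}, while the real ellipsoid \eqref{realellipsoid} is convex and so is $k$-complete by \cite{Mer}.

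For condition (i), I would in each case exhibit an $f$-property with an explicit $f$ and then check the Dini-type condition $\int_1^\infty \frac{\ln \alpha}{\alpha f(\alpha)}\,d\alpha < \infty$. In the finite-type settings of (a)--(d) and the finite-type clause of (e), Catlin~\cite{Cat83, Cat87} supplies the $t^\epsilon$-property for some $\epsilon > 0$ (and McNeal~\cite{McNeal} refines this to $t^{1/m}$ when the domain is convex of finite type $m$), so the integrand is controlled by $(\ln\alpha)\alpha^{-1-\epsilon}$ and convergence is immediate. The infinite-type clause of (e) already provides an $f$-property with $f(t) \geq \ln^{2+\epsilon}(t)$; after the substitution $u = \ln\alpha$ the tail of the integral becomes a finite tail of $\int u^{-1-\epsilon}\,du$. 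Finally, as the introduction records, the ellipsoids in (f) admit the $\log^{1/\alpha}$-property, and the same substitution turns the integral into $\int u^{1 - 1/\alpha}\,du$, which converges at infinity precisely when $1/\alpha > 2$, i.e.\ when $\alpha < 1/2$ --- exactly the hypothesis. This last check, where the sharp threshold $\alpha < 1/2$ emerges from the integral rather than being imported from a reference, is the only step I would regard as a real obstacle; everything else is a matter of citing the appropriate result and invoking Theorem~\ref{main}.
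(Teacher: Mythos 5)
Your proposal is correct and follows essentially the same route as the paper, which states the corollary as an immediate consequence of the remarks preceding it: in each case one cites the known $k$-completeness result and the known $f$-property, then checks the Dini-type integral condition of Theorem~\ref{main}. Your explicit verification of the integrals (including the emergence of the threshold $\alpha<1/2$ in case (f) from $\int^\infty u^{1-1/\alpha}\,du$) simply fills in the computation the paper leaves implicit.
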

	Finally, throughout the paper we use  $\lesssim$ and $\gtrsim$ to denote inequalities up to a positive constant, and $H(\Om_1,\Om_2)$ to denote the set of holomorphic maps from $\Om_1$ to $\Om_2$.
	
	\section{The Kobayashi metric and distance}
	
We start this section by the definition of Kobayashi metric.

	\begin{definition}
	Let $\Om$ be a domain in $\C^n$, and $T^{1,0}\Om$ be its holomorphic tangent bundle.
 The Kobayahsi (pseudo)metric $K_\Om: T^{1,0}\Omega\to \mathbb R$ is defined by 
	\begin{eqnarray}
	\begin{split}
	K_\Om(z,X)=&\inf\{ \alpha>0~|~\exists~ \Psi\in H(\Delta,\Om): \Psi(0)=0, \Psi'(0)=\alpha^{-1}X\},
	\end{split}
	\end{eqnarray}
for any $z\in \Om$ and $X\in T^{1,0}\Om$, where $\Delta$ be the unit open disk of $\C$.  
\end{definition}
	For the case that $\Omega$ is a smoothly pseudoconvex bounded domain of finite  type,  it is known that there exist $\ep>0$ such that the Kobayashi metric $K_\Omega$ 
	has the lower bound  $\delta^{-\epsilon}(z)$ (see \cite{Cho}, \cite{D-F}), in other word, 
	$$ K_\Om(z,X)\gtrsim \frac{|X|}{\delta^\epsilon_{\Om}(z)},$$
where $|X|$ is the euclidean length of $X$. 
	 Recently, the first author \cite{Kha14} obtained lower bounds on the Kobayashi metric for a general class of pseudoconvex domains in $\mathbb C^n$, that contains all domains of finite type and many domains of infinite type. 
	
	\begin{theorem}\label{k1}Let $\Om$ be a pseudoconvex domain in $\C^n$ with $C^2$-smooth boundary $\di\Om$. Assume that $\Om$ has the $f$-property with $f$ satisfying $\displaystyle\int_t^\infty \dfrac{d\alpha}{\alpha f(\alpha)}<\infty$ for some $t\ge 1$, and denote by $(g(t))^{-1}$ this finite integral. Then, 
		\begin{eqnarray}\label{K1}
		K(z,X) \gtrsim g(\delta^{-1}_{\Om}(z))|X|
		\end{eqnarray}
		for any $z\in\Om$ and $X\in T^{1,0}_z\Om$.
	\end{theorem}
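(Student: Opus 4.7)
The plan is to reduce the estimate to a construction of a suitable bounded plurisubharmonic function, following the Sibony--Catlin paradigm. The first ingredient is a Sibony-type metric lemma: for any $\Phi \in C^2(\Omega) \cap \mathrm{PSH}(\Omega)$ with $\|\Phi\|_{L^\infty(\Omega)} \le 1$, one has
\[
K_\Omega(z,X)^2 \;\gtrsim\; \mathcal{L}\Phi(z,X),
\]
where $\mathcal{L}\Phi(z,X)=\sum_{i,j}\Phi_{z_i\bar z_j}(z)X^i\overline{X^j}$. The proof is classical: if $\varphi:\Delta\to\Omega$ is any holomorphic disc with $\varphi(0)=z$, then $\Phi\circ\varphi$ is a bounded subharmonic function on $\Delta$, so Green's identity, together with the $L^\infty$ bound on $\Phi\circ\varphi$, forces $\mathcal{L}\Phi(z,\varphi'(0))\lesssim 1$; the desired estimate on $K_\Omega$ follows by taking the infimum over $\varphi$ in the definition of the Kobayashi metric.

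The main step is then to construct, for each $z_0\in\Omega$ with $\delta_0 := \delta_\Omega(z_0)$ small, a psh function $\Phi_{z_0}\in C^2(\Omega)$ with $|\Phi_{z_0}|\le 1$ and Levi-form bound
\[
\mathcal{L}\Phi_{z_0}(z_0,X) \;\gtrsim\; g(\delta_0^{-1})^2\,|X|^2.
\]
The construction is a weighted dyadic superposition of the family $\{\psi_\eta\}$ supplied by the $f$-property: with $\eta_j := 2^{-j}$ and $j_0$ chosen so that $\eta_{j_0}\sim\delta_0$, set
\[
\Phi_{z_0}(z) \;=\; C\sum_{j=0}^{j_0} a_j\,\psi_{\eta_j}(z),
\]
with positive weights $a_j$ and a normalizing constant $C$. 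The two required conditions are (i) $C\sum_j a_j \le 1$ (the $L^\infty$ bound) and (ii) $C\sum_j a_j f(\eta_j^{-1})^2 \gtrsim g(\delta_0^{-1})^2$ (the Levi-form bound at $z_0$, using $i\partial\bar\partial \psi_{\eta_j} \ge c_1 f(\eta_j^{-1})^2\,\mathrm{Id}$, which is valid at $z_0$ because $\eta_j\ge\delta_0$). The defining relation $g(t)^{-1}=\int_t^\infty d\alpha/(\alpha f(\alpha))$, in its dyadic form $g(2^{j_0})^{-1}\sim\sum_{j\ge j_0} 1/f(2^j)$ (convergent by hypothesis), guides the choice $a_j \propto 1/f(\eta_j^{-1})$, and a Cauchy--Schwarz step then verifies the two constraints simultaneously.

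Combining the metric lemma with this construction at $z_0$ yields $K_\Omega(z_0,X)\gtrsim g(\delta_\Omega^{-1}(z_0))|X|$ for $z_0$ in a fixed boundary strip; in the interior, where $g(\delta_\Omega^{-1}(z))$ is bounded, the inequality follows trivially from comparison of $K_\Omega$ with the Kobayashi metric of an inscribed Euclidean ball. The main technical difficulty lies in the construction step: using a single $\psi_{\delta_0}$ yields only the bound $K\gtrsim f(\delta_0^{-1})|X|$, which can be strictly weaker than $g(\delta_0^{-1})|X|$ when $f$ grows super-polynomially (so that $g\gg f$), and recovering the full $g$-improvement requires a genuine combination over many scales, calibrated by the convergence of the integral defining $g$. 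Balancing the $L^\infty$-norm against the Levi-form lower bound with precisely the right quantitative outcome $g(\delta_0^{-1})^2$ is the subtle arithmetic that the hypothesis on $f$ is designed to make possible.
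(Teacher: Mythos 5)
First, a point of reference: this paper does not prove Theorem~\ref{k1} at all --- it is quoted from \cite{Kha14} --- so your proposal can only be measured against that source. Your overall architecture (superpose the family $\{\psi_\eta\}$ over dyadic scales to get a bounded plurisubharmonic function with large complex Hessian, then convert that into a lower bound for $K_\Omega$) is indeed the Catlin--Cho--Khanh paradigm. But the execution has a fatal gap at its foundation. The ``Sibony-type metric lemma'' you start from is false: it is not true that $K_\Omega(z,X)^2\gtrsim \mathcal{L}\Phi(z,X)$ for every $\Phi\in C^2(\Omega)\cap\mathrm{PSH}(\Omega)$ with $\|\Phi\|_{L^\infty}\le 1$. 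On $\Omega=\Delta$ take
\[
\Phi_N(w)=\frac{2\log\bigl(1+N|w|^2\bigr)}{\log(1+N)}-1,
\]
which is plurisubharmonic with $|\Phi_N|\le 1$, yet $\mathcal{L}\Phi_N(0,1)=2N/\log(1+N)\to\infty$ while $K_\Delta(0,1)=1$. Your Green's-identity argument only controls the weighted \emph{integral} of the Laplacian of $\Phi\circ\varphi$ over a disc, not its pointwise value at the origin, and a nonnegative function admits no reverse mean-value inequality. Sibony's actual lemma requires a competitor $u$ with $u(z_0)=0$, $0\le u<1$ and $\log u$ plurisubharmonic; manufacturing such a $u$ from a bounded psh function requires the Hessian lower bound on a whole ball around $z_0$ together with control of the gradient --- which is exactly where the hypothesis $|D\psi_\eta|\le c_2\eta^{-1}$ of the $f$-property must enter, and your proposal never uses it.

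This omission also hides where $g$ really comes from. Under the two constraints you impose, namely $C\sum_j a_j\le 1$ and $C\sum_j a_j f(\eta_j^{-1})^2\gtrsim g(\delta_0^{-1})^2$, the optimal choice is to put all the weight at the single scale $j=j_0$, which yields $f(\delta_0^{-1})^2$; since $g(t)^{-1}\ge\int_t^{et}\frac{d\alpha}{\alpha f(\alpha)}\ge f(et)^{-1}$ for increasing $f$, one has $g\lesssim f$ throughout the range relevant here (boundedness of $\psi_\eta$ forces $f(t)\lesssim t$), so your parenthetical claim that the single-scale bound $f(\delta_0^{-1})$ ``can be strictly weaker than $g(\delta_0^{-1})$'' is backwards. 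If your framework were correct, one scale would already prove a statement at least as strong as the theorem and the integral hypothesis on $f$ would be irrelevant. Note also that $g(2^{j_0})^{-1}\sim\sum_{j\ge j_0}1/f(2^j)$ is a sum over scales \emph{finer} than $\delta_0$, at which $z_0$ lies outside the strip where $\psi_{\eta_j}$ has its Hessian bound, whereas your superposition uses only the coarser scales $j\le j_0$; this mismatch is another sign that the mechanism actually producing the transform $g(t)^{-1}=\int_t^\infty\frac{d\alpha}{\alpha f(\alpha)}$ --- the genuine content of \cite{Kha14} --- is absent from the proposal.
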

	
	The Kobayashi (pseudo)distance $k_\Om:\Om\times\Om\to \mathbb R^+$ on $\Om$ is the integrated form of $K_\Om$, and given by 
	$$k_\Om(z,w)=\inf\left\{\int_a^bK_\Om(\gamma(t),\dot\gamma(t))dt~\Big|~ \gamma:[a,b]\to\Om, \T{piecwise $C^1$-smooth curve}, \gamma(a)=z,\gamma(b)=w \right\}$$
for any $z,w\in \Om$. The particular property of $k_\Om$ that it is contracted by holomorphic maps, i.e.,
\begin{eqnarray}\label{contracted}
\T{if} \quad\phi\in H(\Om,\tilde\Om)\quad \T{then}\quad k_{\tilde\Om}(\phi(z),\phi(w))\le k_\Om(z,w),\quad\T{for all}\quad  z,w\in\Om. 
\end{eqnarray}
	
We need the following lemma in \cite{A0,F-R}.
\begin{lemma}\label{l0}
	Let $\Om$ be a bounded $C^2$-smooth domain in $\C^n$ and $z_0\in \Om$. Then 
	there exists a constant $c_0>0$ depending on $\Om$ and $z_0$ such that
	$$k_\Om(z_0,z)\le c_0-\frac{1}{2}\log \delta(z,\di\Om)$$
	 for any $z\in \Om$.
\end{lemma}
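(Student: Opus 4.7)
The plan is to combine the uniform interior ball condition provided by $C^2$-smoothness with the monotonicity of the Kobayashi distance under holomorphic inclusions \eqref{contracted} and its explicit form on a Euclidean ball.

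Since $\di\Om$ is $C^2$, there exists $r>0$ such that at every $p\in\di\Om$ the open ball $B_p:=B(p-r\nu_p,r)$, where $\nu_p$ is the outward unit normal at $p$, is contained in $\Om$ and tangent to $\di\Om$ at $p$ (the standard interior ball condition). The map $p\mapsto c_p:=p-r\nu_p$ is continuous, so $\mathcal{K}:=\{c_p:p\in\di\Om\}$ is a compact subset of $\Om$. In particular $k_\Om(z_0,\cdot)$ is bounded on $\mathcal{K}$ by some constant $C_1=C_1(\Om,z_0)$.

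Given $z\in\Om$ with $\delta(z,\di\Om)<r$, pick a nearest boundary point $p\in\di\Om$. Then $z$ lies on the segment from $p$ to $c_p$ with $|z-c_p|=r-\delta(z,\di\Om)$, and hence $z\in B_p$. The affine disc $\zeta\mapsto c_p+r\zeta\tfrac{z-c_p}{|z-c_p|}$ embeds $\Delta$ holomorphically into $B_p$, sending $0\mapsto c_p$ and $|z-c_p|/r\mapsto z$; together with the contraction property \eqref{contracted} applied to $B_p\hookrightarrow\Om$ this gives
$$k_\Om(c_p,z)\le k_{B_p}(c_p,z)\le \tfrac{1}{2}\log\frac{r+|z-c_p|}{r-|z-c_p|}=\tfrac{1}{2}\log\frac{2r-\delta(z,\di\Om)}{\delta(z,\di\Om)}\le \tfrac{1}{2}\log(2r)-\tfrac{1}{2}\log\delta(z,\di\Om).$$
The triangle inequality together with the uniform bound on $\mathcal{K}$ yields $k_\Om(z_0,z)\le C_1+\tfrac{1}{2}\log(2r)-\tfrac{1}{2}\log\delta(z,\di\Om)$ in this regime.

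For $z$ with $\delta(z,\di\Om)\ge r$, the point $z$ belongs to the compact set $\{w\in\Om:\delta(w,\di\Om)\ge r\}$, so $k_\Om(z_0,z)$ is bounded by some $C_2=C_2(\Om,z_0)$; since $\delta(z,\di\Om)\le\mathrm{diam}(\Om)$, the inequality $k_\Om(z_0,z)\le C_2+\tfrac{1}{2}\log\mathrm{diam}(\Om)-\tfrac{1}{2}\log\delta(z,\di\Om)$ holds trivially. Setting $c_0$ to be the maximum of the two constants completes the argument. There is no real obstacle here; the only substantive input is the upper bound on the Kobayashi distance on a ball, which follows directly from the definition via the affine disc, and the $C^2$-regularity is used solely to guarantee a uniform interior radius $r$.
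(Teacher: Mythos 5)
Your proof is correct. The paper gives no proof of this lemma at all---it simply cites \cite{A0,F-R}---and your argument (uniform interior tangent ball from $C^2$-regularity, the contraction property applied to an affine disc in that ball to get the $-\tfrac{1}{2}\log\delta$ bound, and a compactness argument for points far from $\di\Om$) is precisely the standard proof found in those references, so there is nothing to correct or compare.
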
 We recall that the curve $\gamma:[a,b]\to\Om$ is called a minimizing geodesic with respect to Kobayashi metric between two point $z=\gamma(a)$ and $w=\gamma(b)$ if 
$$k_\Om(\gamma(s),\gamma(t))=t-s=\int_{s}^t K_{\Om}(\gamma(t),\dot\gamma(t))dt, \quad \T{for any } s,t\in [a,b], s\le t.$$
This implies that 
$$K(\gamma(t),\dot\gamma(t))=1,\quad \T{for any } t\in [a,b].$$
	The relation between the Kobayashi distance $k_\Om(z,w)$ and the euclidean distance $\delta_\Om(z,w)$ will be expressed by the following lemma, which is a generalization of \cite[Lemma $36$]{Kar}.
		\begin{lemma}\label{Lem1} Let $\Om$ be a bounded, pseudoconvex, $C^2$-smooth domain in $\C^n$ satisfying the $f$-property with $\displaystyle\int_1^\infty\frac{\ln \alpha}{\alpha f(\alpha)}d\alpha<\infty$ and $z_0\in\Om$.   Then, there exists a constant $c$ only depending on $z_0$ and $\Om$ such that
		\begin{equation}\label{eq1207}
		\delta_\Om(z, w)\le c\int_{e^{2k_{\Om}(z_0,\gamma)-2c_0}}^\infty\frac{\ln \alpha}{\alpha f(\alpha)}d\alpha.		\end{equation}
		for all $z,w\in \Omega$, where $\gamma$ is a minimizing geodesic connecting $z$ to $w$ and $c_0$ is the constant given in Lemma~\ref{l0}.
	\end{lemma}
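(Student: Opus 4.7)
The idea is to push the Kobayashi-metric lower bound of Theorem~\ref{k1} through a minimizing geodesic and then convert the resulting $1/g$-integral into the target $\ln\alpha/(\alpha f(\alpha))$-integral by a Fubini swap. I would parametrize $\gamma\colon[a,b]\to\Om$ by Kobayashi arclength so that $K_\Om(\gamma(t),\dot\gamma(t))\equiv 1$. Since the Euclidean distance between the endpoints is dominated by the Euclidean length of any joining curve, Theorem~\ref{k1} gives
\[
\delta_\Om(z,w)\le\int_a^b|\dot\gamma(t)|\,dt\les\int_a^b\frac{dt}{g\!\left(\delta_\Om^{-1}(\gamma(t))\right)}.
\]
Combining this with $\delta_\Om(\gamma(t))\le e^{2c_0-2d(t)}$ (read off from Lemma~\ref{l0}, where $d(t):=k_\Om(z_0,\gamma(t))$) and the fact that $1/g$ is decreasing yields
\[
\delta_\Om(z,w)\les\int_a^b\frac{dt}{g\!\left(e^{2d(t)-2c_0}\right)}.
\]

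Next I would use the geodesic structure to control $d(t)$ from below. Let $t_0\in[a,b]$ minimize $d$ and write $D:=d(t_0)=k_\Om(z_0,\gamma)$. Because $k_\Om(\gamma(s),\gamma(t))=|s-t|$, the triangle inequality gives $d(t)\ge\max(D,|t-t_0|-D)$. Splitting at $|t-t_0|=2D$, the short-range part contributes at most $4D/g(e^{2D-2c_0})$, while on the two long-range tails the substitutions $u=2(|t-t_0|-D)-2c_0$ and then $\alpha=e^u$ bundle them into
\[
\int_{e^{2D-2c_0}}^\infty\frac{d\alpha}{\alpha\,g(\alpha)}.
\]

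The heart of the argument is then the Fubini swap based on $1/g(\alpha)=\int_\alpha^\infty d\beta/(\beta f(\beta))$, which gives
\[
\int_A^\infty\frac{d\alpha}{\alpha\,g(\alpha)}=\int_A^\infty\frac{\ln(\beta/A)}{\beta f(\beta)}\,d\beta=\int_A^\infty\frac{\ln\beta}{\beta f(\beta)}\,d\beta-\frac{\ln A}{g(A)}.
\]
Taking $A=e^{2D-2c_0}$ so that $\ln A=2D-2c_0$, the boundary term $4D/g(A)$ and the subtraction telescope into a residual $(2D+2c_0)/g(A)$, which in turn is $\lesssim\int_A^\infty \ln\beta/(\beta f(\beta))\,d\beta$ via the elementary estimate $\ln\beta\ge\ln A$ on $[A,\infty)$.

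The main obstacle I foresee is the small-$D$ regime. When $A<e$ the inequality $\ln\beta\ge 1$ fails on $[A,e)$ and the integrand is even negative on $(A,1)$, so the residual $(2D+2c_0)/g(A)$ cannot be absorbed cleanly. I would dispose of this case separately by bounding $\delta_\Om(z,w)$ above by the (finite) diameter of $\Om$ and observing that $\int_A^\infty\ln\beta/(\beta f(\beta))\,d\beta$ is bounded below by a positive constant once $A$ stays in a bounded range. Aside from this bookkeeping, the geometric steps---the arclength parametrization, the pointwise application of Theorem~\ref{k1} and Lemma~\ref{l0}, the triangle-inequality split, and the two changes of variable---are mechanical once the correct substitutions are identified.
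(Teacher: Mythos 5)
Your proposal is correct and follows essentially the same route as the paper's proof: unit-speed parametrization of the minimizing geodesic, the pointwise combination of Theorem~\ref{k1} with Lemma~\ref{l0}, the triangle-inequality bound $d(t)\ge\max(D,|t-t_0|-D)$ with a split at distance $O(D)$ from the closest point, and the identical Fubini identity turning $\int_A^\infty d\alpha/(\alpha g(\alpha))$ into $\int_A^\infty(\ln\alpha-\ln A)/(\alpha f(\alpha))\,d\alpha$ (the paper splits the geodesic at the closest point $p$ and argues on each half by symmetry, which is only a cosmetic difference from your two-tail treatment). The one substantive difference is that you explicitly flag and patch the small-$k_\Om(z_0,\gamma)$ regime, where $\ln A$ is not bounded below and the residual term cannot be absorbed; the paper's proof passes over this silently, and your diameter-based fix for it is sound.
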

	\begin{proof} We only need to consider $z\not= w$ otherwise it is trivial. 
		Let $p$ be a point on $\gamma$ of minimal distance to $z_0$. We can assume that $p\not=z$ (if not, we interchange $z$ and $w$) and denote by $\gamma_1:[0,a]\to \Om$ the parametrized piece of $\gamma$ going from $p$ to $z$. By the minimality of $k_\Om(z_0,\gamma)=k_{\Om}(z_0,p)$ and the triangle inequality we have 
		\begin{eqnarray}\label{triangle}
		k_\Om(z_0,\gamma_1(t))\ge k_\Om(z_0,\gamma) \quad \T{and }\quad k_\Om(z_0,\gamma_1(t))\ge k_\Om(p,\gamma(t))-k_\Om(z_0,p)=t-k_\Om(z_0,\gamma)
		\end{eqnarray}  
		for any $t\in[0,a]$.
		Substituting $z=\gamma_1(t)$ into the inequality in Lemma~\ref{l0}, it follows  
		$$
		\frac{1}{\delta_\Om(\gamma_1(t))}\geq e^{2k_\Om(z_0,\gamma_1(t))-2c_0}
		$$
		for all $t\in [0,a]$. 	 Since $\gamma_1$ is a unit speed curve with respect to $K_\Om$  we have 
		\begin{equation}\label{delta}
		\begin{split}
		\delta_\Om(p,z)\leq& \int_0^a |\gamma_1'(t)| dt\\
		\lesssim& \int_0^a \left(g\left(\frac{1}{\delta(\gamma_1 (t)}\right)\right)^{-1} K_\Om(\gamma_1 (t),\gamma_1' (t))	dt\\
		\lesssim& \int_0^a \left(g\left(e^{2k_\Om(z_0,\gamma_1(t))-2c_0}\right)\right)^{-1}dt\\
			\end{split}
			\end{equation}
			 We now compare $a$ and $4k_{\Om}(z_0,\gamma_1(t))$. In the case  $a>4k_{\Om}(z_0,\gamma_1(t))$, we split the integral into two parts and use the inequalities \eqref{triangle} combining with the increasing of $g$. It gives us  
		
			\begin{equation}	\begin{split}
		\delta_\Om(p,z)\lesssim& \int_0^{4k_{\Om}(z_0,\gamma)}\left(g\left(e^{2k_\Om(z_0,\gamma(t))-2c_0}\right)\right)^{-1}dt+\int_{4k_{\Om}(z_0,\gamma)}^a\left(g\left(e^{2k_\Om(z_0,\gamma(t))-2c_0}\right)\right)^{-1}dt\\
		\lesssim& \int_0^{4k_{\Om}(z_0,\gamma)}\left(g\left(e^{2k_\Om(z_0,\gamma)-2c_0}\right)\right)^{-1}dt+\int_{4k_{\Om}(z_0,\gamma)}^\infty\left(g\left(e^{2t-2k_\Om(z_0,\gamma)-2c_0}\right)\right)^{-1}dt\\
		\lesssim& 	\frac{4k_{\Om}(z_0,\gamma)}{g\left(e^{2k_\Om(z_0,\gamma)-2c_0}\right)}+\int_{e^{2k_{\Om}(z_0,\gamma)-2c_0}}^\infty\frac{d\alpha}{\alpha g(\alpha)}\\
		\lesssim& 	\left(\frac{\ln s}{g(s)}+\int_{s}^\infty\frac{d\alpha}{\alpha g(\alpha)}\right)\Big|_{s=e^{2k_{\Om}(z_0,\gamma)-2c_0}}.
		\end{split}
		\end{equation}
We notice that
$$\int_{s}^\infty\frac{d\alpha}{\alpha g(\alpha)}=\int_s^\infty\frac{1}{\alpha}\left(\int_\alpha^\infty\frac{d\beta }{\beta f(\beta)}\right)d\alpha=\int_s^\infty\frac{1}{\alpha f(\alpha)}\left(\int_s^\alpha\frac{d\beta}{\beta}\right)d\alpha=\int^\infty_s\frac{\ln \alpha-\ln s}{\alpha f(\alpha)}d\alpha,$$
and hence,
$$\frac{\ln s}{g(s)}+\int_{s}^\infty\frac{d\alpha}{\alpha g(\alpha)}=\int_s^\infty\frac{\ln \alpha}{\alpha f(\alpha)}d\alpha.$$
Therefore, in this case we obtain 
$$\delta_\Om(p,z)\lesssim \int_{e^{2k_\Om(z_0,\gamma)-2c_0}}^\infty\frac{\ln \alpha}{\alpha f(\alpha)}d\alpha.$$
In the case $a<4k_{\Om}(z_0,\gamma)$, we make the same estimate but without decomposing the integral. By a symmetric argument with $w$ instead of $z$, we also have 	$$\delta_\Om(p,w)\lesssim \int_{e^{2k_\Om(z_0,\gamma)-2c_0}}^\infty\frac{\ln \alpha}{\alpha f(\alpha)}d\alpha.$$
 The conclusion of this lemma follows by the triangle inequality.
	\end{proof}

	\begin{corollary}\label{C2} Let $\Omega $ be a bouned, pseudoconvex domain in ${\mathbb C}^n$ with $C^2$-smooth boundary satisfying the $f$-property with  $\displaystyle\int_1^\infty\frac{\ln \alpha}{\alpha f(\alpha )}d\alpha<\infty$. Furthermore, assume that $\Om$ is k-complete. Let $\{w_n\},\{z_n\}\subset\Omega$ be two sequence such that $w_n\to x\in \di\Om$ and $z_n\to y\in \bar\Omega\setminus \{x\}$. Then $k_\Om(w_n,z_n)\to \infty$.
\end{corollary}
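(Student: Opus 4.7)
The plan is to split into two cases according to whether the limit point $y$ lies in the interior $\Om$ or on the boundary $\di\Om$, handling the latter by a contradiction argument built on Lemma~\ref{Lem1}.

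If $y\in\Om$, the argument is a triangle-inequality exercise. By continuity of $k_\Om$ the values $k_\Om(z_0,z_n)$ stay bounded, while $k$-completeness together with $w_n\to x\in\di\Om$ forces $k_\Om(z_0,w_n)\to\infty$, so
\begin{equation*}
k_\Om(w_n,z_n)\ge k_\Om(z_0,w_n)-k_\Om(z_0,z_n)\longrightarrow\infty.
\end{equation*}

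For the main case $y\in\di\Om\setminus\{x\}$, I would argue by contradiction. Suppose that after passing to a subsequence $k_\Om(w_n,z_n)\le M$ for some $M>0$. For each $n$ pick a minimizing Kobayashi geodesic $\gamma_n$ joining $w_n$ to $z_n$; existence follows from a Hopf--Rinow-type theorem applied to the $k$-complete, locally compact length space $(\Om,k_\Om)$, or more cheaply by choosing curves of length at most $k_\Om(w_n,z_n)+1/n$, which still suffice for the estimates below. Let $p_n\in\gamma_n$ minimize $k_\Om(z_0,\cdot)$ along $\gamma_n$. Since every point of $\gamma_n$ lies within $k_\Om$-distance $M$ of $w_n$, one has
\begin{equation*}
k_\Om(z_0,p_n)\ge k_\Om(z_0,w_n)-M\longrightarrow\infty
\end{equation*}
by $k$-completeness and $w_n\to x\in\di\Om$. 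Applying Lemma~\ref{Lem1} with this $\gamma_n$ gives
\begin{equation*}
\delta_\Om(w_n,z_n)\le c\int_{e^{2k_\Om(z_0,p_n)-2c_0}}^\infty\frac{\ln\alpha}{\alpha f(\alpha)}\,d\alpha,
\end{equation*}
and the integrability hypothesis $\int_1^\infty\frac{\ln\alpha}{\alpha f(\alpha)}\,d\alpha<\infty$ forces the right-hand side to $0$. This contradicts $\delta_\Om(w_n,z_n)\to|x-y|>0$, completing the proof.

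The only real obstacle is producing an (almost-)minimizing geodesic $\gamma_n$; once that is granted, the argument is a direct combination of $k$-completeness and Lemma~\ref{Lem1}, with the assumed bound $k_\Om(w_n,z_n)\le M$ serving only to pin all of $\gamma_n$ inside the Kobayashi $M$-ball around $w_n$ and thereby push the closest $k_\Om$-approach of $\gamma_n$ to $z_0$ out to infinity.
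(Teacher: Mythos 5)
Your argument is correct and uses exactly the paper's ingredients — Lemma~\ref{Lem1} applied to the minimizing geodesic $\gamma_n$, the point $p_n$ of closest Kobayashi approach to $z_0$, the triangle inequality, and $k$-completeness — only arranged as a proof by contradiction rather than the paper's direct estimate $k_\Om(z_0,w_n)\le k_\Om(z_0,p_n)+k_\Om(w_n,z_n)\lesssim 1+k_\Om(w_n,z_n)$, which is just the contrapositive of your chain. The separate case $y\in\Om$ is unnecessary (the unified argument only needs $\delta_\Om(w_n,z_n)\gtrsim 1$), but it is harmless.
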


\begin{proof} Fix a point $z_0\in \Omega$ and let $\gamma_n:[a_n,b_n]\to\Om$ is a minimizing geodesic connecting $z_n=\gamma(a_n)$ and $w_n=\gamma(b_n)$. Since $x\ne y$, it follows $\delta(z_n,w_n)\gtrsim 1$.  By Lemma \ref{Lem1}, 
	it follows
$$	1\lesssim \int^\infty_{e^{k_\Om(z_0,\gamma_n)-2c_0}}\frac{\ln \alpha}{
\alpha f(\alpha)}d\alpha.$$
	This inequality implies that $k_\Om(z_0,\gamma_n)\lesssim 1$ because the function $\displaystyle \int_{s}^\infty\frac{\ln \alpha}{\alpha f(\alpha)}d\alpha$ is decreasing. It means that there is a point $p_n\in \gamma_n $ such that $k_\Om(z_0,p_n)=k_\Om(z_0,\gamma_n)\lesssim 1$. Moreover, 
\begin{equation*}
\begin{split}
k_\Om(z_0,w_n)&\leq k_\Om(z_0,p_n)+k_\Om(p_n,w_n)\\
                             & \leq k_\Om(z_0,p_n)+k_\Om(w_n,z_n)\\
                              &\lesssim k_\Om(w_n,z_n)+1.
\end{split}
\end{equation*}
Since $\Om$ is k-complete, this implies $k_\Om(z_0,w_n)\to \infty$ as $w_n\to x\in \di\Om$. This proves Corollary~\ref{C2}.
\end{proof}
\section{Proof of Theorem~\ref{main}}
In order to give the proof of Theorem~\ref{main}, we  recall the definition of small, big horospheres and $F$-convex in \cite{A1, A3}.
\begin{definition}(see \cite[p.228]{A1}) Let $\Omega$ be a domain in $\mathbb C^n$ . Fix $z_0\in\Omega,\ x \in \p \Omega$ and $R > 0.$ Then
the small horosphere $E_{z_0}(x, R)$ and the big horosphere $F_{z_0}(x, R)$ of center $x,$ pole $z_ 0$
and radius $R$ are defined by
$$E_{z_0} (x, R) = \{z\in \Omega\colon \limsup_{\Omega \ni w\to x} [ k_\Om(z, w) -k_\Om(z_0, w)] < \frac{1}{2}\log  R \},$$
$$F_{z_0} (x, R) = \{z\in \Omega\colon \liminf_{\Omega \ni w\to x} [ k_\Om(z, w) - k_\Om(z_0, w)] < \frac{1}{2}\log  R \}.$$
\end{definition}
\begin{definition}(see \cite[p.185]{A3})
 A domain $\Omega\subset \mathbb C^n$ is called $F$-convex if for every
$x \in \partial \Omega$
$$  \overline{F_{z_0}(x, R)}\cap \di\Omega \subseteq \{x\}$$
holds for every $R > 0$ and for every $z_0\in \Omega$. 
\end{definition}
\begin{remark} The bidisk $\Delta^2$ in $\C^2$ is not $F$-convex. Indeed, since $d_{\Delta^2}((1/2,1-1/k),(0,1-1/k))-d_{\Delta^2}((0,0),(0,1-1/k))=d_\Delta(1/2,0)-d_\Delta(0,1-1/k) \to -\infty$ as $\mathbb N^*\ni k\to \infty$,  $ (1/2,1)\in\overline{F^{\Delta^2}_{(0,0)}((0,1), R)}\cap \partial(\Delta^2)$ for any $R>0$.
\end{remark}

\begin{remark} If $\Om$ is a strongly pseudoconvex domain in $\C^n$, or pseudoconvex domains of finite type in $\C^2$, or a domains of strict finite type in $\C^n$ then $\Om$ is $F$-convex (see \cite{A1,A3, Zha}).
\end{remark}     
Now, we prove that $F$-convexity holds on a larger class of pseudoconvex domains.
\begin{proposition} \label{T9} Let $\Omega$ be a domain satisfying the hypothesis in Theorem~\ref{main}. Then $\Omega$  is $F$-convex.
\end{proposition}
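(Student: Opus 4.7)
The plan is to argue by contradiction. Suppose $y \in \overline{F_{z_0}(x,R)} \cap \di\Om$ with $y \ne x$, choose $\{z_n\} \subset F_{z_0}(x,R)$ with $z_n \to y$, and, using the $\liminf$ in the definition of the big horosphere together with a diagonal argument, select $w_n \in \Om$ with $\|w_n - x\| < 1/n$ such that
\[
k_\Om(z_n, w_n) - k_\Om(z_0, w_n) < \tfrac{1}{2}\log R. \qquad (\ast)
\]
I then let $\gamma_n \colon [a_n, b_n] \to \Om$ be a minimizing geodesic from $z_n$ to $w_n$, and $p_n \in \gamma_n$ a point on which $k_\Om(z_0, \cdot)$ achieves its minimum along $\gamma_n$. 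The strategy is to show that the left-hand side of $(\ast)$ actually tends to $\infty$.

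By the same mechanism as in the proof of Corollary~\ref{C2}, I first show that $k_\Om(z_0, p_n)$ remains bounded in $n$. Since $y \ne x$, we have $\delta_\Om(z_n, w_n) \gtrsim 1$ for all large $n$, and Lemma~\ref{Lem1} applied to the pair $(z_n, w_n)$ yields
\[
1 \lesssim \int_{e^{2k_\Om(z_0, \gamma_n) - 2c_0}}^{\infty} \frac{\ln \alpha}{\alpha f(\alpha)}\,d\alpha.
\]
Because $\int_1^\infty \ln\alpha/(\alpha f(\alpha))\,d\alpha$ converges, the right-hand side tends to $0$ as its lower limit $\to \infty$, which forces $k_\Om(z_0, p_n) = k_\Om(z_0, \gamma_n)$ to be bounded uniformly in $n$.

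Next, since $p_n$ lies on the minimizing geodesic, $k_\Om(z_n, w_n) = k_\Om(z_n, p_n) + k_\Om(p_n, w_n)$. Combining this with the triangle inequalities $k_\Om(z_0, w_n) \le k_\Om(z_0, p_n) + k_\Om(p_n, w_n)$ and $k_\Om(z_n, p_n) \ge k_\Om(z_0, z_n) - k_\Om(z_0, p_n)$ gives
\[
k_\Om(z_n, w_n) - k_\Om(z_0, w_n) \ge k_\Om(z_0, z_n) - 2 k_\Om(z_0, p_n).
\]
By $k$-completeness, $k_\Om(z_0, z_n) \to \infty$ as $z_n \to y \in \di\Om$, while $k_\Om(z_0, p_n)$ is bounded by the previous step. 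Hence the left-hand side tends to $\infty$, contradicting $(\ast)$. Therefore $y = x$ and $\Om$ is $F$-convex.

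The main subtlety I anticipate is the implicit existence of minimizing geodesics, used here as well as in the paper's earlier proofs. In the $k$-complete setting this is standard, but if one wishes to bypass it, the entire argument goes through with $\gamma_n$ replaced by an $\epsilon$-almost minimizing piecewise $C^1$ curve (as in the proof of Lemma~\ref{Lem1}); the geodesic additivity identity only loosens to an $O(\epsilon)$-inequality, which is harmless for the divergence conclusion after letting $\epsilon \to 0$.
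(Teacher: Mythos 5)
Your proof is correct and follows essentially the same route as the paper's: bound $k_\Om(z_0,p_n)$ via Lemma~\ref{Lem1} using $\delta_\Om(z_n,w_n)\gtrsim 1$, then combine the geodesic additivity at $p_n$ with the triangle inequality to force $k_\Om(z_0,z_n)\lesssim 1$ (equivalently, to make the horosphere quantity blow up), contradicting $k$-completeness. The only difference is cosmetic: the paper splits into two cases according to whether $\{p_n\}$ subconverges in $\Om$ or escapes to $\p\Om$, whereas your unified argument shows the case distinction is unnecessary, since Lemma~\ref{Lem1} bounds $k_\Om(z_0,p_n)$ either way; your remark about replacing minimizing geodesics by $\epsilon$-almost minimizing curves is also a fair point that the paper leaves implicit.
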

\begin{proof} Let $R>0$ and $z_0\in\Om$. Assume by contradiction that there exists $y\in  \overline{F_{z_0}(x,R)}\cap \di\Omega$ with  $y \ne x$. Then we can find a sequence $\{z_n\}\subset \Omega$ with $z_n \to y\in\di\Om$ and a sequence $\{w_n\}\subset \Omega$ with $w_n \to x\in\di\Om$ such that 
\begin{equation}\label{eqt1}
k_\Om(z_n,w_n)-k_\Om(z_0,w_n)\leq\frac{1}{2}\log R .
\end{equation}
Moreover, for each $n\in \mathbb N^*$ there exists a minimizing geodesic $\gamma_n$ connecting $z_n$ to $w_n$. Let $p_n$ be a point on $\gamma_n$ of minimal distance $k_\Om(z_0,\gamma_n)=k_\Om(z_0,p_n)$ to $z_0$. We consider two following cases of the sequence $\{p_n\}$.
\smallskip

\noindent
{\bf Case 1.} If there exists a subsequence $\{p_{n_k}\}$ of the sequence $\{p_n\}$ such that $p_{n_k}\to p_0\in \Omega$ as $k\to \infty$. 
\begin{equation}\label{eq2}
\begin{split}
k_\Om(w_{n_k},z_{n_k})&\gtrsim k_\Om(w_{n_k},p_{n_k})+k_\Om(p_{n_k},z_{n_k})\\
&\gtrsim k_\Om(w_{n_k},z_0)-k_\Om(z_0,p_{n_k})+k_\Om(p_{n_k},z_{n_k}).
\end{split}
\end{equation}
From (\ref{eqt1}) and (\ref{eq2}), we obtain
$$k_\Om(p_{n_k},z_{n_k})\lesssim k_\Om(w_{n_k},z_{n_k})-k_\Om(w_{n_k},z_0)+k_\Om(z_0,p_{n_k}) \lesssim \frac{1}{2}\log R+k_\Om(z_0,p_{n_k})\lesssim 1. $$
This is a contradiction since $\Omega$ is $k$-complete.
\smallskip

\noindent
{\bf Case 2.} Otherwise, $p_n\to \partial \Omega$ as $n\to\infty$.  By Lemma \ref{Lem1}, there are constants $c$ and $c_0$ only depending on $z_0$ such that
\begin{equation}\label{new1}
\delta_\Om(w_n, z_n)\leq c\int_{e^{2k_{\Om}(z_0,\gamma_n)-2c_0}}^{+\infty}\frac{\ln\alpha }{\alpha f(\alpha)}d\alpha.
\end{equation}
On the other hand, $\delta_\Om(w_n, z_n)\gtrsim 1$ since $x\not=y$. Thus, the inequality \eqref{new1} implies that 
\begin{equation}\label{eqt3}
k_\Om(z_0,\gamma_n)=k_\Om(z_0,p_n)\lesssim 1.
\end{equation}
Therefore,  
\begin{equation}\label{eqt4}
\begin{split}
k_\Om(z_n,w_n) & \gtrsim k_\Om(z_n,q_n)+k_\Om(q_n,w_n)\\
                       &\gtrsim k_\Om(z_0, z_n)+k_\Om(z_0,w_n)-2k_\Om(z_0,q_n).
\end{split}
\end{equation}
Combining with (\ref{eqt1}) and (\ref{eqt3}), we get
$$k_\Om(z_0,z_n)\lesssim k_\Om(z_n,w_n)-k_\Om(z_0,w_n)+2k_{\Om}(z_0,q_n)\lesssim \log R+1.$$
This is a contradiction since $z_n\to y\in\di\Om$ and hence the proof completes.
\end{proof}
 
The following theorem is a generalization of Theorem 3.1 in \cite{A3}.
\begin{proposition}\label{T10} Let $\Omega$ be a domain satisfying the hypothesis in Theorem~\ref{main} and fix $z_0\in \Omega$. Let $\phi\in H(\Om,\Om)$ such that $\{\phi^k\}$ is compactly divergent. Then there is a point  $x\in\partial\Omega$ such that for all $R>0$ and for all $m\in \mathbb N$
$$ \phi^m(E_{z_0}(x,R))\subset F_{z_0}(x,R). $$
\end{proposition}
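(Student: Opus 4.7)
The plan is to produce the boundary point $x$ as a limit of $\{\phi^k(z_0)\}$ along a carefully chosen \emph{record-high} subsequence of Kobayashi distances, and then to combine the contracting property of $k_\Om$ under $\phi^m$ with the definitions of the horospheres to obtain the inclusion with the sharp constant $R$ on both sides.

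First I would observe that $a_k:=k_\Om(z_0,\phi^k(z_0))\to\infty$: compact divergence forces every subsequential limit of $\{\phi^k(z_0)\}$ into $\di\Om$, and $k$-completeness then implies $a_k\to\infty$. In particular the set of left-record-highs
$$\mathcal K=\{k\in\mathbb N : a_k>a_j\text{ for all }j<k\}$$
is infinite, and I would extract a subsequence $\{k_j\}\subset\mathcal K$ along which $\phi^{k_j}(z_0)\to x\in\di\Om$; this $x$ is the candidate boundary point. Next, for each fixed $m\in\mathbb N$, writing $\phi^{k_j}(z_0)=\phi^{k_j-m}(\phi^m(z_0))$ and using the contracting property of $\phi^{k_j-m}$ gives
$$k_\Om\bigl(\phi^{k_j-m}(z_0),\phi^{k_j}(z_0)\bigr)\le k_\Om(z_0,\phi^m(z_0)),$$
uniformly in $j$. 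Since $\phi^{k_j}(z_0)\to x\in\di\Om$, Corollary~\ref{C2} forbids any accumulation point of $\{\phi^{k_j-m}(z_0)\}$ to differ from $x$; a diagonal extraction then makes $\phi^{k_j-m}(z_0)\to x$ for every $m$ simultaneously.

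With these preparations in place, fix $z\in E_{z_0}(x,R)$ and $m\ge 1$. The contracting property applied to $\phi^m$ yields
$$k_\Om\bigl(\phi^m(z),\phi^{k_j}(z_0)\bigr)\le k_\Om\bigl(z,\phi^{k_j-m}(z_0)\bigr),$$
and the record-high condition $a_{k_j}>a_{k_j-m}$ (inherited by every sub-subsequence drawn from $\mathcal K$) yields
$$-k_\Om\bigl(z_0,\phi^{k_j}(z_0)\bigr)\le -k_\Om\bigl(z_0,\phi^{k_j-m}(z_0)\bigr).$$
Adding these, taking $\liminf_{j\to\infty}$, and using $\phi^{k_j-m}(z_0)\to x$ together with $z\in E_{z_0}(x,R)$, one obtains
$$\liminf_{j\to\infty}\Bigl[k_\Om\bigl(\phi^m(z),\phi^{k_j}(z_0)\bigr)-k_\Om\bigl(z_0,\phi^{k_j}(z_0)\bigr)\Bigr]\le\limsup_{w\to x}\bigl[k_\Om(z,w)-k_\Om(z_0,w)\bigr]<\tfrac12\log R,$$
which is exactly the statement $\phi^m(z)\in F_{z_0}(x,R)$.

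The hard part is really the matching of constants. Without the record-high selection, one would only have the generic bound $k_\Om(z_0,\phi^{k_j}(z_0))\ge k_\Om(z_0,\phi^{k_j-m}(z_0))-k_\Om(z_0,\phi^m(z_0))$ (from the contracting property applied to $\phi^m$), and the extra term $k_\Om(z_0,\phi^m(z_0))$ would weaken the conclusion to $\phi^m(z)\in F_{z_0}(x,Re^{2k_\Om(z_0,\phi^m(z_0))})$ with an $m$-dependent radius. Choosing $k_j$ from $\mathcal K$ reverses the sign of this unwanted contribution and produces the sharp radius $R$ asserted in the proposition; the essential use of hypotheses (i) and (ii) of Theorem~\ref{main} is channelled through Corollary~\ref{C2}, which is what pins down $\phi^{k_j-m}(z_0)\to x$.
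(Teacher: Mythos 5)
Your proof is correct and follows essentially the same route as the paper's (Abate's horosphere argument): both select a subsequence along which $k_\Om(z_0,\phi^k(z_0))$ dominates its value at the shifted index, use the contracting property together with Corollary~\ref{C2} to force the shifted orbit to converge to the same boundary point $x$, and then chain the two resulting inequalities to land inside $F_{z_0}(x,R)$. The only difference is cosmetic: the paper takes $k_\nu$ to be the largest $k$ with $k_\Om(z_0,\phi^k(z_0))\le\nu$ (so the comparison looks forward to $k_\nu+m$), whereas you take record highs and look backward to $k_j-m$.
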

\begin{proof}
Since $\{\phi^k\}$ is compactly divergent and $\Om$ is $k$-complete,
$$ \lim_{k\to + \infty}k_\Om(z_0,\phi^k(z_0))=\infty. $$
For every $\nu\in \mathbb N$, let $k_\nu$ be the largest integer $k$ satisfying $k_\Om(z_0,\phi^k(z_0))\leq \nu$; then 
\begin{equation}\label{e1}
k_\Om(z_0,\phi^{k_\nu}(z_0))\leq \nu<k_\Om(z_0,\phi^{k_\nu+m}(z_0))\; \forall \nu\in\mathbb N,\; \forall m>0.
\end{equation}
Again, since $\{\phi^k\}$ is compactly divergent, up to a subsequence, we can assume that 
$$\phi^{k_\nu}(z_0)\to x\in \di\Omega.$$
 Fix an integer $m\in\mathbb N$, 
then without loss of generality we may assume that $
\phi^{k_\nu}(\phi^m(z_0))\to y\in \di\Omega$. Using the fact that 
$$k_\Om(\phi^{k_\nu}(\phi^m(z_0)),\phi^{k_\nu}(z_0))\leq k_\Om(\phi^m(z_0), z_0)\qquad\T{(by \eqref{contracted})}$$ and results in Corollary \ref{C2}, it must hold that $x=y$. \\

Set $w_\nu=\phi^{k_\nu}(z_0)$. Then $w_\nu\to x$ and $\phi^m(w_\nu)=\phi^{k_\nu}(\phi^m(z_0))\to x$. From \eqref{e1}, we also have
\begin{equation}\label{e2}
\limsup_{\nu\to +\infty}[k_\Om(z_0,w_\nu)-k_\Om(z_0,\phi^p(w_\nu))]\leq 0,
\end{equation}
Now, fix $m>0,\; R>0$ and take $z\in E_{z_0}(x,R)$. We obtain 
\begin{equation}\label{e3}
\begin{split}
&\liminf_{\Omega \ni w\to x}[k_\Om(\phi^m(z),w)-k_\Om(z_0,w)]\\
&\leq \liminf_{\nu\to +\infty}[k_\Om(\phi^m(z),\phi^m(w_\nu))-k_\Om(z_0,\phi^m(w_\nu))]\\
&\leq \liminf_{\nu\to +\infty}[k_\Om(z,w_\nu)-k_\Om(z_0,\phi^m(w_\nu))]\\
&\leq \liminf_{\nu\to +\infty}[k_\Om(z,w_\nu)-k_\Om(z_0,\phi_\nu)]\\
&\;\; +\limsup_{\nu\to +\infty}[k_\Om(z_0,w_\nu)-k_\Om(z_0,\phi^m(w_\nu))]\\
&\leq \liminf_{\nu\to +\infty}[k_\Om(z,w_\nu)-k_\Om(z_0,w_\nu)]\\
&\leq \limsup_{\Omega \ni w\to x}[k_\Om(z,w)-k_\Om(z_0,w)]\\
&<\frac{1}{2}\log R,
\end{split}
\end{equation}
that is $\phi^m(z)\in F_{z_0}(x,R)$. Here, the first inequality follows by $\phi^p(w_\nu)\to x$, the second follows  by \eqref{contracted}, the fourth follows by \eqref{e2}, and the last one follows by $z\in E_{z_0}(x,R)$.
\end{proof}
\begin{lemma}\label{Lem2} Let $\Omega$ be a $F$-convex domain in $\C^n$. Then for 
any $x,y\in \partial\Omega$ with $x\ne y$ and for any $R>0$, we have
$ \lim\limits_{a\to y}E_{a}(x,R)=\Omega$, i.e., for each $z\in\Omega$, 
there exists a number $\epsilon>0$ such that $z\in E_{a}(x, R)$ for all $a \in\Omega$ 
with $|a-y|<\epsilon$.
\end{lemma}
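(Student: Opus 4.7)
The plan is to argue by contradiction and to reinterpret the negation of the conclusion as a statement about a big horosphere centered at $x$; the $F$-convexity hypothesis will then contradict the assumption $y \ne x$. The crucial observation is that the definitions of $E_{z_0}(x,R)$ and $F_{z_0}(x,R)$ are essentially symmetric in the roles of the base point and the pole, so ``$z$ does not lie in the small horosphere with pole $a$'' can be rephrased as ``$a$ lies in the big horosphere with pole $z$'' at a suitably adjusted radius.

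First I would assume, for contradiction, that for some $z \in \Omega$ there is a sequence $\{a_n\} \subset \Omega$ with $a_n \to y$ but $z \notin E_{a_n}(x,R)$ for every $n$. Unwinding the definition of the small horosphere, this reads
\[
\limsup_{\Omega \ni w \to x}\bigl[k_\Omega(z,w) - k_\Omega(a_n,w)\bigr] \geq \tfrac{1}{2}\log R,
\]
which after flipping signs becomes
\[
\liminf_{\Omega \ni w \to x}\bigl[k_\Omega(a_n,w) - k_\Omega(z,w)\bigr] \leq -\tfrac{1}{2}\log R.
\]

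Next, fixing any $R' > 1/R$ makes $-\tfrac{1}{2}\log R < \tfrac{1}{2}\log R'$, so the previous inequality says exactly that $a_n \in F_z(x,R')$ for every $n$. Passing to the limit then yields $y \in \overline{F_z(x,R')} \cap \partial\Omega$, and $F$-convexity applied with pole $z$ and radius $R'$ forces $y = x$, contradicting $y \ne x$.

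I do not expect a serious obstacle here: the argument is essentially a formal manipulation of the definitions together with a single application of $F$-convexity. The one mildly delicate point is the bookkeeping of the radius---one must allow $R'$ to depend on $R$ (any $R' > 1/R$ suffices) rather than attempting to reuse $R$ itself.
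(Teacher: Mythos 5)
Your proposal is correct and follows essentially the same contradiction argument as the paper: negate the conclusion, flip the sign to see that the $a_n$ lie in a big horosphere with pole $z$, and invoke $F$-convexity to force $y=x$. Your bookkeeping with $R'>1/R$ is in fact slightly more careful than the paper's, which passes from the non-strict inequality directly to $a_n\in\overline{F_z(x,1/R)}$.
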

\begin{proof}
Suppose that there exists $z\in \Omega$ such that there exists a sequence $\{a_n\}\subset \Omega$ with $a_n\to y$ and $z\not \in E_{a_n}(x,R)$.  Then we have
$$\limsup_{w\to x}[k_\Om(z,w)-k_\Om(a_n,w)]\geq \frac{1}{2}\log R.$$
This implies that
$$\liminf_{w\to x}[k_\Om(a_n,w)-k_\Om(z,w)]\leq \frac{1}{2}\log \frac{1}{R}.$$
Thus, $a_n\in \overline{F_z(x,1/R)}$, for all $n=1,2,\cdots$. Therefore, $y\in\overline{F_z(x,1/R)}\cap \partial\Omega=\{x\}$, which is absurd. This ends the proof.
\end{proof}
Now we are ready to prove our main result.
\begin{proof}[{\bf Proof of Theorem \ref{main}}]First we fix a point $z_0\in\Om$, by Proposition \ref{T10} there is a point  $x\in\partial\Omega$ such that for all $R>0$ and for all $m\in \mathbb N$
$$ \phi^m(E_{z_0}(x,R))\subset F_{z_0}(x,R). $$
 We need to show that for any $z\in\Om$
$$\phi^{m}(z)\to x\quad\T{ as }\quad m\to +\infty.$$ 
Indeed, let $\psi(z)$ be a limit point of $\{\phi^{m}(z)\}$. 
Since $\{\phi^{m}\}$ is compactly divergent, $\psi(z)\in \di\Omega$. By Lemma \ref{Lem2}, for any $R>0$ there is $a\in \Omega$ such that $z\in E_{a}(x,R)$. By Proposition \ref{T10}, $\phi^m(z)\in F_{a}(x,R)$ for every $m\in\mathbb N^*$. Therefore, $$\psi(z) \in \di\Omega\cap \overline{F_{a}(x,R)}=\{x\}$$ 
by  Proposition \ref{T9}; thus the proof is complete.
\end{proof}

\section*{Acknowledgments} The research of the second author was supported in part by a grant of Vietnam National University at Hanoi, Vietnam. This work was completed when the second author was visiting the Vietnam Institute for Advanced Study in Mathematics (VIASM). He would like to thank the VIASM for financial support and hospitality.

\end{document}